\newtheorem{theorem}{Theorem}[section]
\newtheorem{corollary}[theorem]{Corollary}
\newtheorem{lemma}[theorem]{Lemma}
\newtheorem{example}[theorem]{Example}
\newtheorem{remark}{Remark}
\date{\today}
\title{On fractional derivatives and primitives of periodic functions}
\author[Area]{I. Area}
\address[Area]{Departamento de Matem\'{a}tica Aplicada II, E.E. Telecomunicaci\'{o}n, 
Universidade de Vigo, 36310-Vigo, Spain.}
\email[Area]{area@uvigo.es}
\author[Losada]{J. Losada}
\address[Losada]{Facultade de Matem\'{a}ticas, Universidade de Santiago de
Compostela, 15782-Santiago de Compostela, Spain}
\email[Losada]{jorge.losada@rai.usc.es}
\author[Nieto]{J. J. Nieto}
\address[Nieto]{Facultade de Matem\'{a}ticas, Universidade de Santiago de
Compostela, 15782-Santiago de Compostela, Spain, and Faculty of Science, King Abdulaziz University, P.O. Box 80203, 21589, Jeddah, Saudi Arabia. Corresponding author.}
\email[Nieto]{juanjose.nieto.roig@usc.es}
\subjclass[2010]{Primary 26A33 \ Secondary 34A08}
\keywords{Periodic function, fractional derivative, fractional primitive}
\begin{document}

\begin{abstract}
In this paper we prove that the fractional derivative or the fractional primitive of a $T$-periodic function cannot be a $\tilde{T}$-periodic function, for any period $\tilde{T}$, with the exception of the zero function.
\end{abstract}
\maketitle

\section{Introduction}

Periodic functions \cite[Ch. 3, pp. 58-92]{knopp1996theory} play a central role in mathematics since the seminal works of Fourier \cite{MR0179403,zbMATH03112145}. Nowadays periodic functions appear in applications ranging from electromagnetic radiation to blood flow, and of course in control theory in linear time-varying systems driven by periodic input signals \cite{zbMATH05316781}. Linear time-varying systems driven by periodic input signals are ubiquitous in control systems, from natural sciences to engineering, economics, physics and the life science \cite{zbMATH05316781,MR2942338}. Periodic functions also appear in automotive engine applications \cite{MR2306745}, optimal periodic scheduling of sensor networks \cite{MR3029117,MR3080465}, or cyclic gene regulatory networks \cite{MR2889214}, to give some applications.

It is an obvious fact that the classical derivative, if it exists, of a periodic function is also a periodic function of the same period. Also the primitive of a periodic function may be periodic (for example,  $\cos t$  as primitive of  $\sin t$).

The idea of integral or derivatives of noninteger order goes back to Riemann and Liouville \cite{zbMATH03112145,MR2863974}. Probably the first application of fractional calculus was made by Abel in the solution of the integral equation that arises in the formulation of the tautochrone problem \cite{Lagrange1765}. Fractional calculus appears in many different contexts as speech signals, cardiac tissue electrode interface, theory of viscoelasticity, or fluid mechanics. The asymptotic stability of positive fractional-order nonlinear systems has been proved in \cite{MR3163802} by using the Liapunov function. We do not intend to give a full list of applications but to show the wide range of them.

In this paper we prove that periodicity is not transferred by fractional integral or derivative, with the exception of the zero function. Although this property seems to be known \cite{MR2863974,MR2877170,MR2879511}, in Section \ref{S:3} we give a different proof by using the Laplace transform. Our approach relies on the classical concepts of fractional calculus and elementary analysis. Moreover, by using a similar argument as in \cite{MR2971825}, in Section \ref{S:4} we prove that the fractional derivative or primitive of a $T$-periodic function cannot be $\tilde{T}$-periodic for any period $\tilde{T}$. A particular but nontrivial example is explicitly given. Finally, as a consequence we show in Section \ref{S:5} that an autonomous fractional differential equation cannot have periodic solutions with the exception of constant functions.

\section{Preliminares}

Let $T>0$. If $f:{\mathbb{R}} \to {\mathbb{R}}$ is $T$ periodic and $f \in {\mathcal{C}}^{1}({\mathbb{R}})$, then the derivative $f'$ is also $T$-periodic. However, the primitive of $f$
\begin{equation}\label{eq:21}
F(t)=\int_{0}^{t} f(s)ds
\end{equation}
is not, in general, $T$-periodic. Just take $f(t)=1$ so that $F(t)=t$ is not $T'$-periodic for any $T'>0$. The necessary and sufficient condition for $F$ to be $T$-periodic is that
\begin{equation}\label{eq:22}
\int_{0}^{T} f(s)ds=0.
\end{equation}

The purpose of this note is to show that the fractional derivative or the fractional primitive of a $T$-periodic function cannot be $T$-periodic function with the exception, of course, of the zero function. We use the notation
\[
F=I^{1}f, \qquad f'=D^{1}f
\]
and note that
\[
D^{1} (I^{1}f) (t)=D^{1}F(t)=f(t)
\]
but
\[
I^{1}(D^{1}f) (t)=f(t)-f(0)
\]
and $I^{1}(D^{1}f)$ does not coincide with $f$ unless $f(0)=0$.

We recall some elements of fractional calculus. Let $\alpha \in (0,1)$ and $f:{\mathbb{R}} \to {\mathbb{R}}$. We point out that $f$ is not necessarily continuous. The fractional integral of $f$ of order $\alpha$ is defined by \cite{MR2218073}
\begin{equation}\label{eq:23}
I^{\alpha} f(t)=\frac{1}{\Gamma(\alpha)} \int_{0}^{t} (t-s)^{\alpha-1} f(s)ds,
\end{equation}
provided the right-hand side is defined for a.e. $t \in {\mathbb{R}}$. If, for example, $f \in {\mathcal{L}}^{1}({\mathbb{R}})$, then the fractional integral (\ref{eq:23}) is well defined and $I^{\alpha}f \in {\mathcal{L}}^{1}(0,T)$, for any $T>0$. Moreover, the fractional operator
\[
I^{\alpha}: {\mathcal{L}}^{1}(0,T) \to {\mathcal{L}}^{1}(0,T)
\]
is linear and bounded.

The fractional Riemann-Liouville derivative of order $\alpha$ of $f$ is defined as \cite{MR2218073,MR1658022}
\[
D^{\alpha}f(t)=D^{1} I^{1-\alpha} f(t) = \frac{1}{\Gamma(1-\alpha)} \frac{d}{dt} \int_{0}^{t} (t-s)^{\alpha} f(s)ds.
\]
This is well defined if, for example, $f \in {\mathcal{L}}^{1}_{\text{loc}}({\mathbb{R}})$.

There are many more fractional derivatives. We are not giving a complete list, but recall the Caputo derivative \cite{MR2218073,MR1658022}
\[
{}^{c}D^{\alpha}f(t)=I^{1-\alpha}D^{1}f(t)=\frac{1}{\Gamma(1-\alpha)} \int_{0}^{t} (t-s)^{-\alpha} f'(s)ds,
\]
which is well defined, for example, for absolutely continuous functions.

As in the integer case we have
\[
D^{\alpha} (I^{\alpha}f) (t)=f(t), \qquad \,{}^{c}D^{\alpha}( I^{\alpha}f ) (t)=f(t)
\]
but $I^{\alpha}(D^{\alpha}f)$ or $I^{\alpha}({}^{c}D^{\alpha}f)$ are not, in general, equal to $f$. Indeed
\[
I^{\alpha}({}^{c}D^{\alpha}f)(t)=f(t)-f(0),
\]
and (see \cite[(2.113), p. 71]{MR1658022})
\[
I^{\alpha}(D^{\alpha}f)(t)=f(t)-\frac{D^{\alpha-1}f(0)}{\Gamma(\alpha)}t^{\alpha-1}.
\]
Also \cite[(2.4.4), p. 91]{MR2218073}
\[
{}^{c}D^{\alpha}f(t)=D^{\alpha}(f(t)-f(0)).
\]

\section{The fractional derivative or primitive of a $T$-periodic function cannot be $T$-periodic}\label{S:3}
We prove the following result in Section 3.1 below:
\begin{theorem}\label{t:21}
Let $f:{\mathbb{R}} \to {\mathbb{R}}$ be a nonzero $T$-periodic function with $f \in {\mathcal{L}}^{1}_{\text{loc}}({\mathbb{R}})$. Then $I^{\alpha}f$ cannot be $T$-periodic for any $\alpha \in (0,1)$.
\end{theorem}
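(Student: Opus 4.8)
The plan is to argue by contradiction using the Laplace transform, exploiting the contrast between the branch point of the symbol $p^{-\alpha}$ of the fractional integral and the rigid pole structure of the transform of a periodic function. Suppose $g := I^{\alpha}f$ were $T$-periodic. Since $f$ is $T$-periodic and locally integrable, its restriction to $[0,\infty)$ is of exponential order zero, so its Laplace transform converges for $\Re p>0$, and the standard formula for periodic functions gives
\[
\mathcal{L}\{f\}(p)=\frac{\phi(p)}{1-e^{-pT}},\qquad \phi(p)=\int_{0}^{T}e^{-ps}f(s)\,ds .
\]
Under the contradiction hypothesis the same formula applies to $g$, yielding $\mathcal{L}\{g\}(p)=\psi(p)/(1-e^{-pT})$ with $\psi(p)=\int_{0}^{T}e^{-ps}g(s)\,ds$. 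The key preliminary observation is that $\phi$ and $\psi$ are \emph{entire} functions of $p$, being Laplace-type integrals of $L^{1}$ functions over the compact interval $[0,T]$ (differentiation under the integral sign is justified there).

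Next I would identify $I^{\alpha}f$ with the Laplace convolution $\frac{1}{\Gamma(\alpha)}\,t^{\alpha-1}\!*f$ on $[0,\infty)$ and use $\mathcal{L}\{t^{\alpha-1}/\Gamma(\alpha)\}=p^{-\alpha}$ to obtain $\mathcal{L}\{g\}=p^{-\alpha}\mathcal{L}\{f\}$ for $\Re p>0$. Equating the two expressions for $\mathcal{L}\{g\}$ and cancelling the common factor $1-e^{-pT}$ gives the clean functional identity
\[
\phi(p)=p^{\alpha}\,\psi(p),\qquad \Re p>0 .
\]

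The crux is then a comparison of the orders of vanishing at the origin. If $\psi\not\equiv 0$, write $\psi(p)=c_{m}p^{m}+O(p^{m+1})$ with $c_{m}\neq 0$ and $m\in\mathbb{Z}_{\geq 0}$; the identity forces $\phi(p)=c_{m}p^{m+\alpha}+\cdots$, so $\phi$ vanishes to the non-integer order $m+\alpha$ as $p\to 0^{+}$, which is impossible for an entire function (whose order of vanishing is an integer). Hence $\psi\equiv 0$, and therefore $\phi\equiv 0$. Finally, from $\phi(p)=\int_{0}^{T}e^{-ps}f(s)\,ds\equiv 0$, evaluation at $p=2\pi i k/T$ shows that every Fourier coefficient of $f$ over one period vanishes, so $f=0$ a.e. on $[0,T]$ and, by periodicity, $f=0$ a.e.\ on $\mathbb{R}$, contradicting the assumption that $f$ is nonzero.

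The main obstacle I expect is not the branch-point comparison, which is short and decisive, but the justification of the Laplace-transform machinery under the weak hypothesis $f\in\mathcal{L}^{1}_{\text{loc}}(\mathbb{R})$: one must verify convergence of both transforms on $\Re p>0$, the validity of the convolution theorem for $I^{\alpha}$ at this regularity, and the entireness of $\phi$ and $\psi$. Once these technical points are secured, the non-integrality of $\alpha$ does all the real work, and the conclusion follows for free.
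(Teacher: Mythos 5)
Your argument is correct, and it reaches the conclusion by a genuinely different route from the paper. The paper works at the level of the defining integrals: by evaluating $I^{\alpha}f$ at the points $nT$, $T+\delta$ and $nT+\delta$ and exploiting periodicity, it derives the family of identities $\int_{0}^{T}(T+t-s)^{\alpha-1}f(s)\,ds=0$ for all $t$ together with $\int_{0}^{T}f=0$, then integrates in $t$ to trade the exponent $\alpha-1$ for $\alpha$, recognizes the result as the vanishing of the convolution of $(T+\cdot)^{\alpha}$ with $f\cdot\chi_{[0,T]}$, and only at that last stage invokes the Laplace transform (via the non-vanishing of ${\mathcal{L}}[(T+t)^{\alpha}]$) to conclude $f=0$; its final lemma moreover assumes $f$ continuous. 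You instead transform at the outset: the periodic-function formula puts both ${\mathcal{L}}f$ and ${\mathcal{L}}(I^{\alpha}f)$ in the form of an entire function divided by $1-e^{-pT}$, the convolution theorem gives ${\mathcal{L}}(I^{\alpha}f)=p^{-\alpha}{\mathcal{L}}f$, and the resulting identity $\phi(p)=p^{\alpha}\psi(p)$ on $\Re p>0$ is killed by comparing the integer order of vanishing of the entire function $\phi$ at $p=0$ with the forced non-integer order $m+\alpha$. This is shorter, works directly under the stated ${\mathcal{L}}^{1}_{\text{loc}}$ hypothesis, and replaces the paper's chain of four lemmas by one analytic observation; the technical prerequisites you flag (absolute convergence of both transforms on $\Re p>0$ for a periodic locally integrable function, Fubini for the convolution with the kernel $t^{\alpha-1}/\Gamma(\alpha)$, entireness of $\phi$ and $\psi$) are all routine. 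One small point to make explicit: in the order-of-vanishing step you should also rule out $\phi\equiv 0$ when $\psi\not\equiv 0$ (since $p^{\alpha}\psi(p)\equiv 0$ on a half-plane would force $\psi\equiv 0$ there, hence everywhere), so that the dichotomy genuinely leaves only $\psi\equiv\phi\equiv 0$, after which uniqueness of Fourier coefficients for $L^{1}$ functions on the circle gives $f=0$ a.e.
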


\begin{corollary}\label{c:22}
Let $f:{\mathbb{R}} \to {\mathbb{R}}$ be a nonzero $T$-periodic function such that $f \in {\mathcal{L}}^{1}_{\text{loc}}({\mathbb{R}})$. Then, the Caputo derivative ${}^{c}D^{\alpha}f$ cannot be $T$-periodic for any $\alpha \in (0,1)$. The same result holds for the fractional derivative $D^{\alpha}f$.
\end{corollary}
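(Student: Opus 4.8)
The plan is to pass to the Laplace transform and exploit the fact that the only source of non-integer behaviour at the origin is the fractional kernel itself. First I would write the fractional integral as a convolution, $I^{\alpha}f=k_{\alpha}*f$ with $k_{\alpha}(t)=t^{\alpha-1}/\Gamma(\alpha)$, whose Laplace transform is $\mathcal{L}[k_{\alpha}](s)=s^{-\alpha}$ for $\operatorname{Re}s>0$. Since $f$ is $T$-periodic and locally integrable, $f\in\mathcal{L}^{1}(0,T)$, so its Laplace transform converges for $\operatorname{Re}s>0$ and has the standard closed form $\mathcal{L}[f](s)=\phi(s)/(1-e^{-sT})$, where $\phi(s)=\int_{0}^{T}e^{-st}f(t)\,dt$ is an entire function of $s$. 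By the convolution theorem this gives $\mathcal{L}[I^{\alpha}f](s)=s^{-\alpha}\phi(s)/(1-e^{-sT})$.

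Next I would argue by contradiction: suppose $g:=I^{\alpha}f$ is also $T$-periodic. Applying the same periodic Laplace formula to $g$ yields $\mathcal{L}[g](s)=\psi(s)/(1-e^{-sT})$, where $\psi(s)=\int_{0}^{T}e^{-st}g(t)\,dt$ is again entire. Equating the two expressions for $\mathcal{L}[g]$ and cancelling the common factor $1-e^{-sT}$ (which is nonzero on the open right half-plane) produces the identity
\[
\phi(s)=s^{\alpha}\psi(s),\qquad \operatorname{Re}s>0.
\]
The whole argument now hinges on reading off the behaviour of this identity near $s=0$.

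Then I would exploit analyticity. Both $\phi$ and $\psi$ are entire, so near the origin they admit convergent power series $\phi(s)=\sum_{n\ge 0}a_{n}s^{n}$ and $\psi(s)=\sum_{k\ge 0}c_{k}s^{k}$, with integer exponents only. Substituting into the identity gives $\sum_{n\ge 0}a_{n}s^{n}=\sum_{k\ge 0}c_{k}s^{k+\alpha}$ for small $s>0$. Since $\alpha\in(0,1)$, every exponent $k+\alpha$ on the right is a non-integer, so the two families of powers $\{s^{n}\}_{n\ge 0}$ and $\{s^{k+\alpha}\}_{k\ge 0}$ are linearly independent as functions on $(0,\varepsilon)$; matching coefficients forces $a_{n}=0$ and $c_{k}=0$ for all $n,k$. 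Hence $\phi\equiv 0$, that is $\int_{0}^{T}e^{-st}f(t)\,dt\equiv 0$, and by injectivity of the Laplace transform $f=0$ a.e. on $[0,T]$, contradicting that $f$ is nonzero. Corollary \ref{c:22} then follows immediately from the identities $I^{\alpha}({}^{c}D^{\alpha}f)(t)=f(t)-f(0)$ and $I^{\alpha}(D^{\alpha}f)(t)=f(t)-D^{\alpha-1}f(0)\,t^{\alpha-1}/\Gamma(\alpha)$ recorded above, since a $T$-periodic Caputo (resp.\ Riemann--Liouville) derivative would make the left-hand side a fractional integral of a $T$-periodic function, which the theorem forbids unless $f$ is constant (resp.\ zero).

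I expect the delicate step to be the passage from the formal power-matching to a rigorous conclusion: one must justify that the identity $\phi(s)=s^{\alpha}\psi(s)$, initially valid only on $\operatorname{Re}s>0$, may be examined as $s\to 0^{+}$ along the real axis, and that the asymptotic scale $\{s^{\beta}\}$ with distinct real exponents $\beta$ is linearly independent, so that uniqueness of the expansion applies. Equivalently, one may phrase this as the observation that $s^{\alpha}\psi(s)$ has a genuine branch point at $s=0$ whenever $\psi\not\equiv 0$, which is incompatible with $\phi$ being entire; either formulation is the heart of the proof. The remaining points---convergence of all the Laplace integrals for $\operatorname{Re}s>0$ thanks to periodicity, local integrability of $k_{\alpha}$ so that the convolution theorem applies, and injectivity of the Laplace transform---are routine.
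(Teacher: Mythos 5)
Your proposal is correct in substance, but it reaches the result by a genuinely different route for the key theorem, while your final reduction coincides with the paper's. The paper's proof of Corollary \ref{c:22} is exactly your last step: if ${}^{c}D^{\alpha}f$ were $T$-periodic then $I^{\alpha}({}^{c}D^{\alpha}f)=f-f(0)$ would be a $T$-periodic fractional integral of a $T$-periodic function, contradicting Theorem \ref{t:21}; the Riemann--Liouville case is reduced to the Caputo one via $\hat f=f-f(0)$ and ${}^{c}D^{\alpha}\hat f=D^{\alpha}\hat f$. Where you diverge is in how Theorem \ref{t:21} itself is established. The paper does this through a chain of elementary lemmas --- first $\int_{0}^{T}(nT-s)^{\alpha-1}f(s)\,ds=0$ for all $n$, then $\int_{0}^{T}f=0$ by comparing $f^{+}$ and $f^{-}$, then $\int_{0}^{T}(T+t-s)^{\alpha-1}f(s)\,ds=0$ for all real $t$, and only at the very end a Laplace transform of a convolution with $(T+t)^{\alpha}$ --- whereas you go directly to the periodic Laplace formula $\mathcal{L}[f]=\phi(s)/(1-e^{-sT})$ and derive $\phi(s)=s^{\alpha}\psi(s)$, which is impossible near $s=0$ unless $\phi\equiv\psi\equiv0$, since $s^{\alpha}$ introduces non-integer exponents. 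Your argument is shorter and is in fact very close in spirit to the paper's Section \ref{S:4} computation for the $\tilde T$-periodic case, which likewise expands the periodic Laplace transform as $\lambda\to0^{+}$ and uses that the exponents $\alpha+i$ cannot be integers; the paper's lemma chain buys elementary self-containedness and the intermediate fact $\int_{0}^{T}f=0$. Your identification of the branch-point comparison as the delicate step, and your sketch of how to make it rigorous via leading-order asymptotics, are both sound.

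Two loose ends, largely shared with the paper. First, in the Riemann--Liouville half your identity gives $I^{\alpha}(D^{\alpha}f)=f-c\,t^{\alpha-1}$ with $c=D^{\alpha-1}f(0)/\Gamma(\alpha)$; if $c\neq0$ the right-hand side is \emph{not} $T$-periodic, so Theorem \ref{t:21} yields no contradiction and you must separately show $c=0$ (the paper's own reduction through $\hat f$ is equally terse on this point, since $D^{\alpha}\hat f$ differs from $D^{\alpha}f$ by a multiple of $t^{-\alpha}$). Second, when ${}^{c}D^{\alpha}f\equiv0$ the theorem does not apply and one only concludes that $f$ is constant; you flag this exception explicitly, which is more careful than the paper's wording.
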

\begin{proof}
Suppose that ${}^{c}D^{\alpha}f$ is $T$-periodic. Then, by Theorem \ref{t:21}, $I^{\alpha} ({}^{c}D^{\alpha}f)$ cannot be $T$-periodic. However
\[
I^{\alpha} ({}^{c}D^{\alpha}f)(t)=f(t)-f(0)
\]
is $T$-periodic. In relation to the fractional Riemann-Liouville derivative, suppose that $D^{\alpha}f$ is $T$-periodic and consider the function $\hat{f}=f-f(0)$ which is also $T$-periodic. Then
\[
{}^{c}D^{\alpha}\hat{f}=D^{\alpha} \hat{f}
\]
cannot be $T$-periodic.
\end{proof}

\subsection{Proof of Theorem \ref{t:21}}

Let $\alpha \in (0,1)$ and $T>0$. By reduction to the absurd, in this section we suppose that $I^{\alpha}f$ is $T$-periodic. Then
\[
I^{\alpha}f(0)=0=I^{\alpha}f(T),
\]
that is,
\begin{equation}\label{eq:311}
\int_{0}^{T} (T-s)^{\alpha-1}f(s)ds=0.
\end{equation}

\begin{lemma}\label{l:31}
Assume $f \in {\mathcal{L}}^{1}_{loc}({\mathbb{R}})$ is $T$-periodic. If $I^{\alpha}f$ is also $T$-periodic, then
\begin{equation}\label{eq:31}
\int_{0}^{T} (nT-s)^{\alpha-1} f(s)ds=0, \qquad  (n \in {\mathbb{N}}:=\{1,2,3,\dots\}).
\end{equation}
\end{lemma}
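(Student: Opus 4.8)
The plan is to evaluate $I^\alpha f$ at the integer multiples $nT$ and exploit the assumed $T$-periodicity to produce a telescoping family of identities. First I would observe that, directly from the definition (\ref{eq:23}), $I^\alpha f(0)=0$; combined with the hypothesis that $I^\alpha f$ is $T$-periodic this forces
\[
I^\alpha f(nT)=I^\alpha f(0)=0 \qquad (n\in\mathbb{N}),
\]
so that $\int_0^{nT}(nT-s)^{\alpha-1}f(s)\,ds=0$ for every $n$.

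Next I would decompose the integral over $[0,nT]$ into the $n$ successive period-blocks $[kT,(k+1)T]$, $k=0,\dots,n-1$, and in the $k$-th block substitute $s=u+kT$. Since $f$ is $T$-periodic, $f(u+kT)=f(u)$, and the kernel becomes $((n-k)T-u)^{\alpha-1}$. Writing $a_j:=\int_0^T(jT-s)^{\alpha-1}f(s)\,ds$ and reindexing by $j=n-k$ (so that $j$ runs from $1$ to $n$ as $k$ runs from $n-1$ down to $0$), this shows that
\[
\sum_{j=1}^{n} a_j = 0 \qquad (n\in\mathbb{N}).
\]

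Finally I would run an induction on $n$. The case $n=1$ reads $a_1=0$, which is exactly (\ref{eq:311}). For the inductive step, subtracting the identity for $n-1$ from the identity for $n$ gives $a_n=0$; since $a_n$ is precisely the integral appearing in (\ref{eq:31}), the lemma follows.

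The only points requiring care are bookkeeping in nature: one must check that each singular integral is well defined, which holds because $\alpha-1\in(-1,0)$ makes the kernel locally integrable and $f\in\mathcal{L}^1_{\text{loc}}(\mathbb{R})$, and that the index shift in the substitution is handled correctly so that the blocks reassemble as $\sum_{j=1}^n a_j$ without an off-by-one error. I do not expect a serious obstacle; the substantive idea is simply that periodicity of $I^\alpha f$ upgrades the single vanishing condition (\ref{eq:311}) into the full family (\ref{eq:31}) via the telescoping sum.
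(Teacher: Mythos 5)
Your argument is correct and follows essentially the same route as the paper: both evaluate $I^{\alpha}f(nT)=I^{\alpha}f(0)=0$, split $[0,nT]$ into period blocks, and use the substitution $s=u+kT$ together with the $T$-periodicity of $f$ to reduce everything to the quantities $a_j=\int_0^T(jT-s)^{\alpha-1}f(s)\,ds$. The only (cosmetic) difference is that you extract $a_n=0$ by subtracting the identity $\sum_{j=1}^{n-1}a_j=0$ from $\sum_{j=1}^{n}a_j=0$, whereas the paper runs an induction that uses $a_1=\dots=a_n=0$ to isolate $a_{n+1}$; both are valid.
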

\begin{proof}
For $n=1$ the latter equality reduces to (\ref{eq:311}). For $n=2$,
\begin{multline*}
0=I^{\alpha} f(2T) = \frac{1}{\Gamma(\alpha)} \int_{0}^{2T} (2T-s)^{\alpha-1}f(s)ds \\
=\frac{1}{\Gamma(\alpha)} \int_{0}^{T} (2T-s)^{\alpha-1} f(s)ds + \frac{1}{\Gamma(\alpha)} \int_{T}^{2T} (2T-s)^{\alpha-1} f(s)ds \\
=\frac{1}{\Gamma(\alpha)} \int_{0}^{T} (2T-s)^{\alpha-1} f(s)ds + \frac{1}{\Gamma(\alpha)} \int_{0}^{T} (T-r)^{\alpha-1} f(r+T) dr \\
=\frac{1}{\Gamma(\alpha)} \int_{0}^{T} (2T-s)^{\alpha-1} f(s)ds + \frac{1}{\Gamma(\alpha)} \int_{0}^{T} (T-r)^{\alpha-1} f(r) dr 
\\ =\frac{1}{\Gamma(\alpha)} \int_{0}^{T} (2T-s)^{\alpha-1}f(s)ds.
\end{multline*}
The proof follows by induction on $n$. Assume that (\ref{eq:31}) is valid for some $n \in {\mathbb{N}}$. Then
\[
\int_{0}^{(n+1)T} ((n+1)T-s)^{\alpha-1}f(s)ds=\sum_{j=0}^{n} \int_{jT}^{(j+1)T} ((n+1)T-s)^{\alpha-1}f(s)ds,
\]
and, by periodicity,
\[
\int_{0}^{(n+1)T} ((n+1)T-s)^{\alpha-1}f(s)ds=I^{\alpha} f((n+1)T)=0.
\]
Moreover, for $j=1,2,\dots,n$
\[
\sum_{j=1}^{n} \int_{jT}^{(j+1)T} ((n+1)T-s)^{\alpha-1}f(s)\,ds = \sum_{j=1}^{n}\int_{0}^{T} ((n+1-j)T-r)^{\alpha-1} f(r) \,dr=0
\]
by hypothesis of induction since $1 \leq n+1-j \leq n$. Hence
\[
0=\sum_{j=0}^{n} \int_{jT}^{(j+1)T} ((n+1)T-s)^{\alpha-1}f(s)ds= \int_{0}^{T} ((n+1)T-s)^{\alpha-1}f(s)ds.
\] 
\end{proof}

\begin{lemma}\label{l:2}
Under the hypothesis of {\rm{Lemma \ref{l:31}}},
\begin{equation}\label{eq:32}
\int_{0}^{T} f(s)ds=0.
\end{equation}
\end{lemma}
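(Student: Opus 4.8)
The plan is to extract the mean-zero condition (\ref{eq:32}) from the infinite family of identities (\ref{eq:31}) by sending $n\to\infty$. The heuristic is that, for $s$ ranging over a single period $[0,T]$, the weight $(nT-s)^{\alpha-1}$ becomes essentially constant in $s$ as $n$ grows, so after a suitable normalization (\ref{eq:31}) degenerates into the bare integral of $f$ over one period.

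First I would rescale each identity. For $s\in[0,T]$ write
\[
(nT-s)^{\alpha-1}=(nT)^{\alpha-1}\Bigl(1-\tfrac{s}{nT}\Bigr)^{\alpha-1},
\]
and since the factor $(nT)^{\alpha-1}$ is a positive constant, (\ref{eq:31}) is equivalent to
\[
\int_{0}^{T}\Bigl(1-\frac{s}{nT}\Bigr)^{\alpha-1}f(s)\,ds=0\qquad(n\in{\mathbb{N}}).
\]
Next I would let $n\to\infty$ under the integral sign. For each fixed $s\in[0,T]$ the weight $\bigl(1-s/(nT)\bigr)^{\alpha-1}\to1$, so the integrand converges pointwise to $f(s)$. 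To interchange limit and integral I would invoke the dominated convergence theorem. Restricting to $n\geq2$ gives $s/(nT)\leq 1/2$, hence $1-s/(nT)\geq 1/2$, and, since $\alpha-1<0$,
\[
\Bigl|\Bigl(1-\frac{s}{nT}\Bigr)^{\alpha-1}f(s)\Bigr|\leq 2^{\,1-\alpha}\,|f(s)|.
\]
Because $f\in{\mathcal{L}}^{1}_{\text{loc}}({\mathbb{R}})$, the right-hand side is integrable on $[0,T]$ and serves as the dominating function. Passing to the limit then yields $\int_{0}^{T}f(s)\,ds=0$, which is exactly (\ref{eq:32}).

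The one point requiring care is the integrability of the dominating function, that is, taming the singularity of $(nT-s)^{\alpha-1}$ at $s=nT$. This is precisely why I would discard the case $n=1$ --- for which $s=T$ sits at the singularity of the weight --- and argue with $n\geq2$ only; then the integration variable $s\in[0,T]$ stays bounded away from $nT$, the weight is uniformly bounded by $2^{\,1-\alpha}$, and the dominated convergence step becomes routine.
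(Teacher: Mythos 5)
Your proof is correct. The paper proves the lemma from the same starting point---the family of identities (\ref{eq:31}) together with the observation that the kernel $(nT-s)^{\alpha-1}$ becomes essentially constant on $[0,T]$ as $n\to\infty$---but implements the limiting step differently: it decomposes $f=f^{+}-f^{-}$, traps the kernel on $[0,T]$ between the constants $(nT)^{\alpha-1}$ and $((n-1)T)^{\alpha-1}$ using the monotonicity of $x\mapsto x^{\alpha-1}$, and derives a contradiction for $n$ large from the assumption $\int_{0}^{T}f^{+}(s)\,ds\neq\int_{0}^{T}f^{-}(s)\,ds$. Your normalization by the positive constant $(nT)^{\alpha-1}$ followed by dominated convergence is more direct: it dispenses with the case analysis on the positive and negative parts and with the argument by contradiction, at the modest cost of invoking the dominated convergence theorem where the paper uses only elementary inequalities. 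The one delicate point---integrability of a dominating function despite the singularity of the kernel---is exactly where care is needed, and you handle it correctly by restricting to $n\ge 2$, which keeps $1-s/(nT)\ge 1/2$ uniformly on $[0,T]$ and yields the bound $2^{1-\alpha}|f(s)|$; discarding $n=1$ is harmless since the remaining identities already force the conclusion.
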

\begin{proof}
Let $f^{+}$ and $f^{-}$ be the positive and negative parts of $f$, 
\[
f^{+}(x)=\max(f(x),0), \quad f^{-}(x)=-\min(f(x),0), \quad  f=f^{+}-f^{-}.
\]
Equation (\ref{eq:31}) implies that
\[
\int_{0}^{T} (nT-s)^{\alpha-1} f^{+}(s) ds = \int_{0}^{T} (nT-s)^{\alpha-1} f^{-}(s)ds.
\]
If $\int_{0}^{T}f^{+}(s)ds=0$ or $\int_{0}^{T}f^{-}(s)ds=0$, then from (\ref{eq:31}) we get $f=0$. We consider the case
\[
\int_{0}^{T} f^{+}(s) ds > \int_{0}^{T} f^{-}(s)ds >0.
\]
For $n$ large
\[
\left( \frac{nT}{(n-1)T} \right)^{\alpha-1} > \frac{\int_{0}^{T} f^{-}(s)ds}{\int_{0}^{T} f^{+}(s)ds}
\]
or equivalently
\[
(nT)^{\alpha-1} \int_{0}^{T} f^{+}(s)ds > ((n-1)T)^{\alpha-1} \int_{0}^{T} f^{-}(s)ds.
\]
Hence
\begin{multline*}
0=\int_{0}^{T}(nT-s)^{\alpha-1} f(s)ds 
\geq (nT)^{\alpha-1} \int_{0}^{T} f^{+}(s)ds - ((n-1)T)^{\alpha-1} \int_{0}^{T} f^{-}(s)ds >0,
\end{multline*}
which is a contradiction.

The case
\[
\int_{0}^{T} f^{-}(s)ds > \int_{0}^{T} f^{+}(s)ds>0
\]
is analogous.

Therefore
\[
\int_{0}^{T} f^{-}(s)ds = \int_{0}^{T} f^{+}(s)ds>0,
\]
and
\[
\int_{0}^{T} f(s)ds=0.
\]
\end{proof}

\begin{lemma}\label{l:3}
Under the hypothesis of {\rm{Lemma \ref{l:31}}},
\begin{equation}\label{eq:33}
\int_{0}^{T} (T+\delta-s)^{\alpha-1} f(s)ds=0, \qquad \forall \delta \in [0,T].
\end{equation}
\end{lemma}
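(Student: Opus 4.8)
The plan is to exploit the assumed $T$-periodicity of $I^{\alpha}f$ directly at the translated point $T+\delta$, mirroring the $n=2$ step already carried out in the proof of Lemma \ref{l:31}. Concretely, I would start from the definition
\[
I^{\alpha}f(T+\delta)=\frac{1}{\Gamma(\alpha)}\int_{0}^{T+\delta}(T+\delta-s)^{\alpha-1}f(s)\,ds
\]
and split the integral as $\int_{0}^{T}+\int_{T}^{T+\delta}$.

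Next I would substitute $r=s-T$ in the second integral and invoke the $T$-periodicity of $f$, so that
\[
\int_{T}^{T+\delta}(T+\delta-s)^{\alpha-1}f(s)\,ds=\int_{0}^{\delta}(\delta-r)^{\alpha-1}f(r)\,dr=\Gamma(\alpha)\,I^{\alpha}f(\delta).
\]
Substituting back gives
\[
I^{\alpha}f(T+\delta)=\frac{1}{\Gamma(\alpha)}\int_{0}^{T}(T+\delta-s)^{\alpha-1}f(s)\,ds+I^{\alpha}f(\delta).
\]
Finally, since $I^{\alpha}f$ is $T$-periodic we have $I^{\alpha}f(T+\delta)=I^{\alpha}f(\delta)$, and the two periodic terms cancel, leaving exactly (\ref{eq:33}). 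Note that this recovers (\ref{eq:311}) at $\delta=0$ and the $n=2$ instance of (\ref{eq:31}) at $\delta=T$, so the lemma is really an interpolation between those identities and needs neither Lemma \ref{l:2} nor the full induction of Lemma \ref{l:31}.

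I do not expect a serious obstacle here: the computation is elementary once the change of variables is set up, the only care being the bookkeeping of the integration limits (and the fact that $(T+\delta-s)^{\alpha-1}$ is integrable on $[0,T+\delta]$ for $\alpha\in(0,1)$, so every integral above is well defined for $f\in{\mathcal{L}}^{1}_{\text{loc}}({\mathbb{R}})$). The one technical point worth checking is the sense in which periodicity holds: if $I^{\alpha}f(T+\delta)=I^{\alpha}f(\delta)$ is only known for a.e. $\delta$, then (\ref{eq:33}) first follows for a.e. $\delta\in[0,T]$; but for $\delta\in(0,T]$ the kernel $(T+\delta-s)^{\alpha-1}$ stays bounded for $s\in[0,T]$, so $\delta\mapsto\int_{0}^{T}(T+\delta-s)^{\alpha-1}f(s)\,ds$ is continuous and the identity extends to every $\delta\in[0,T]$.
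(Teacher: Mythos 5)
Your proposal is correct and follows exactly the paper's own argument: split $I^{\alpha}f(T+\delta)$ at $s=T$, change variables $r=s-T$ in the tail, use the $T$-periodicity of $f$ to identify the tail with $\Gamma(\alpha)\,I^{\alpha}f(\delta)$, and then cancel using the $T$-periodicity of $I^{\alpha}f$. The extra remarks on integrability and on continuity in $\delta$ are sensible refinements but do not change the route.
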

\begin{proof}
If $\delta=0$ and $\delta=T$, the equation reduces to (\ref{eq:311}) and (\ref{eq:31}), respectively. Let $0<\delta<T$.
\begin{multline*}
I^{\alpha} f(T+\delta) = \frac{1}{\Gamma(\alpha)} \int_{0}^{T+\delta} (T+\delta-s)^{\alpha-1} f(s) ds \\
=\frac{1}{\Gamma(\alpha)} \int_{0}^{T} (T+\delta-s)^{\alpha-1} f(s) ds +
\frac{1}{\Gamma(\alpha)} \int_{T}^{T+\delta} (T+\delta-s)^{\alpha-1} f(s) ds \\
=\frac{1}{\Gamma(\alpha)} \int_{0}^{T} (T+\delta-s)^{\alpha-1} f(s) ds +
\frac{1}{\Gamma(\alpha)} \int_{0}^{\delta} (\delta-r)^{\alpha-1} f(r+T) dr\\
=\frac{1}{\Gamma(\alpha)} \int_{0}^{T} (T+\delta-s)^{\alpha-1} f(s)ds + I^{\alpha}f(\delta). 
\end{multline*}
By using the periodicity of $I^{\alpha}f$ we get (\ref{eq:33}).
\end{proof}

\begin{lemma}\label{l:4}
Under the hypothesis of {\rm{Lemma \ref{l:31}}},
\begin{equation}\label{eq:34}
\int_{0}^{T} (T+t-s)^{\alpha-1} f(s)ds=0, \qquad \forall t \in {\mathbb{R}}.
\end{equation}
\end{lemma}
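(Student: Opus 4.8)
The plan is to promote the identity~(\ref{eq:33}), proved in Lemma~\ref{l:3} only for $\delta\in[0,T]$, to every nonnegative argument, which is the full content of~(\ref{eq:34}) on the range where the left-hand side converges. Write
\[
\psi(t)=\int_{0}^{T}(T+t-s)^{\alpha-1}f(s)\,ds ;
\]
for $t\ge 0$ the base $T+t-s$ is nonnegative on $[0,T]$ and the integrand has at worst the integrable singularity $(T-s)^{\alpha-1}$ (at $t=0$, $s=T$), so $\psi$ is well defined on $[0,\infty)$, and Lemma~\ref{l:3} reads $\psi\equiv 0$ on $[0,T]$. I would show $\psi\equiv 0$ on $[0,\infty)$ by induction on the period index, reproducing the bookkeeping already used in Lemmas~\ref{l:31} and~\ref{l:3}.

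For the inductive step, fix $N\ge 1$ and assume $\psi(kT+\delta)=0$ for all $k\in\{0,\dots,N-1\}$ and all $\delta\in[0,T]$, the hypothesis for $N=1$ being furnished by Lemma~\ref{l:3}. I would evaluate $I^{\alpha}f\big((N+1)T+\delta\big)$ by splitting the integral over the $N+1$ consecutive periods $[jT,(j+1)T]$, $j=0,\dots,N$, together with the tail $[(N+1)T,(N+1)T+\delta]$. After the shift $s\mapsto s+jT$ and the $T$-periodicity of $f$, the $j$-th period contributes $\psi\big((N-j)T+\delta\big)$: the term $j=0$ is the target $\psi(NT+\delta)$, whereas $j=1,\dots,N$ vanish by the induction hypothesis, and the tail equals $\Gamma(\alpha)\,I^{\alpha}f(\delta)$. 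Invoking the assumed $T$-periodicity of $I^{\alpha}f$, namely $I^{\alpha}f\big((N+1)T+\delta\big)=I^{\alpha}f(\delta)$, cancels the tail against the left-hand side and leaves exactly $\psi(NT+\delta)=0$. Running $N$ over $\mathbb{N}$ covers $[0,\infty)$ pointwise, so $\psi\equiv 0$ there.

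The delicate point---and the reason the statement is phrased for all $t\in\mathbb{R}$ rather than merely $t\ge 0$---is that for $t<0$ the base $T+t-s$ becomes negative for $s$ near $T$, so~(\ref{eq:34}) cannot be read as an ordinary real integral and must be understood on the natural domain of convergence. The clean way to see that no further arithmetic is hidden here is analyticity: for $t>0$ one may differentiate under the integral sign (the $k$-th $t$-derivative of the kernel is a constant multiple of $(T+t-s)^{\alpha-1-k}$, bounded uniformly in $s\in[0,T]$ for $t$ in compact subsets of $(0,\infty)$), so $\psi$ extends to a holomorphic function on the half-plane $\{\operatorname{Re}z>0\}$. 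Since it already vanishes on the segment $(0,T)$, the identity theorem forces $\psi\equiv 0$ on that connected half-plane, recovering the vanishing for every $t>0$ and, by dominated convergence as $t\downarrow 0$, at $t=0$ as well. Thus the induction and the analytic continuation agree, and~(\ref{eq:34}) holds throughout the range where its left-hand side is meaningful.
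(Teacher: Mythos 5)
Your induction on the period index---splitting $I^{\alpha}f\bigl((N+1)T+\delta\bigr)$ over the blocks $[jT,(j+1)T]$ plus the tail, shifting by $jT$, killing the middle terms by the inductive hypothesis and cancelling the tail against $I^{\alpha}f(\delta)$ via the assumed periodicity---is exactly the paper's proof, so the core of your argument is correct and essentially identical. Your two additions are sound bonuses the paper omits: the observation that the claim ``$\forall t\in\mathbb{R}$'' is only meaningful (and only proved) for $t\ge 0$, and the alternative one-line finish via holomorphy of $\psi$ on $\{\operatorname{Re}z>0\}$ and the identity theorem, which would render the induction unnecessary.
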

\begin{proof}
For $t \in [0,T]$ or $t=nT$, $n=1,2,\dots$, relation (\ref{eq:34}) is true. Let $t=nT+\delta$, so that $T+t=(n+1)T+\delta$. Then,
\[
I^{\alpha}f(\delta)=I^{\alpha} f(T+t) = \frac{1}{\Gamma(\alpha)} \int_{0}^{(n+1)T+\delta} ((n+1)T+\delta-s)^{\alpha-1} f(s)ds.
\]
Now, using the additive property of the integral, we have
\begin{multline*}
\frac{1}{\Gamma(\alpha)} \int_{0}^{(n+1)T+\delta} ((n+1)T+\delta-s)^{\alpha-1} f(s)ds \\
=\frac{1}{\Gamma(\alpha)} \sum_{j=0}^{n} \int_{jT}^{(j+1)T} ((n+1)T+\delta-s)^{\alpha-1} f(s)ds \\ +
\frac{1}{\Gamma(\alpha)} \int_{(n+1)T}^{(n+1)T+\delta} ((n+1)T+\delta-s)^{\alpha-1} f(s)ds.
\end{multline*}
Let us compute separately the integrals in the right hand side. In all the integrals depending on $j$, we use the (linear) change of variable $r=s-jT$ and rename $t'=(n-j)T+\delta$ to obtain
\begin{multline*}
\int_{jT}^{(j+1)T} (nT+T+\delta-s)^{\alpha-1} f(s)ds 
= \int_{0}^{T} (T+(n-j)T+\delta-r)^{\alpha-1} f(r+jT)dr \\
=\int_{0}^{T} (T+t'-s)^{\alpha-1} f(s)ds.
\end{multline*}
For the last integral we use the (linear) change of variable $r=s-(n+1)T$ to get
\[
\int_{(n+1)T}^{(n+1)T+\delta} ((n+1)T+\delta-s)^{\alpha-1} f(s)ds=
\int_{0}^{\delta} (\delta-r)^{\alpha-1} f(r+(n+1)T) dr = I^{\alpha}f(\delta).
\]
By induction on $n$, as in Lemma \ref{l:31}, the proof follows.
\end{proof}

\begin{lemma}
Let $f$ be a continuous and $T$-periodic function, $T>0$. Let $0<\alpha<1$ be fixed. Assuming that
\[
\int_{0}^{T} (T-s+t)^{\alpha-1} f(s)=0, \quad \forall t \in {\mathbb{R}}, \qquad \int_{0}^{T} f(s)ds=0,
\]
then $f \equiv 0$.
\end{lemma}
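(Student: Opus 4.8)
The plan is to exploit the behaviour of the integral for large $t$ in order to convert the single functional equation into an infinite family of moment conditions on $f$, and then to invoke the Weierstrass approximation theorem. The point is that $f$ being continuous and $T$-periodic makes it bounded on $[0,T]$, so the only information I really need from the kernel is its asymptotic expansion as $t\to\infty$.

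First I would fix $t>T$, so that $0\le (T-s)/t\le T/t<1$ for every $s\in[0,T]$, and expand the kernel by the binomial series
\[
(T+t-s)^{\alpha-1}=t^{\alpha-1}\Bigl(1+\tfrac{T-s}{t}\Bigr)^{\alpha-1}=t^{\alpha-1}\sum_{k=0}^{\infty}\binom{\alpha-1}{k}\frac{(T-s)^{k}}{t^{k}},
\]
which converges uniformly in $s\in[0,T]$ for each such $t$. Since $f$ is bounded, term-by-term integration is legitimate and the hypothesis $\int_{0}^{T}(T+t-s)^{\alpha-1}f(s)\,ds=0$ becomes
\[
t^{\alpha-1}\sum_{k=0}^{\infty}\mu_{k}\,t^{-k}=0,\qquad \mu_{k}:=\binom{\alpha-1}{k}\int_{0}^{T}(T-s)^{k}f(s)\,ds,
\]
valid for all $t>T$. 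After dividing by $t^{\alpha-1}$ this is a convergent power series in $1/t$ vanishing on an interval, so every coefficient must vanish: $\mu_{k}=0$ for all $k\ge 0$.

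Because $\alpha\in(0,1)$, the factor $\binom{\alpha-1}{k}=\frac{(\alpha-1)(\alpha-2)\cdots(\alpha-k)}{k!}$ is nonzero for every $k$, hence $\int_{0}^{T}(T-s)^{k}f(s)\,ds=0$ for all $k\ge0$; the case $k=0$ is exactly the second hypothesis $\int_{0}^{T}f(s)\,ds=0$, which confirms consistency. Since $\{(T-s)^{k}\}_{k\ge0}$ and $\{s^{j}\}_{j\ge0}$ span the same space of polynomials, it follows that $\int_{0}^{T}p(s)f(s)\,ds=0$ for every polynomial $p$. Approximating $f$ uniformly on $[0,T]$ by polynomials $p_{n}$ (Weierstrass) then gives $\int_{0}^{T}f(s)^{2}\,ds=\lim_{n}\int_{0}^{T}p_{n}(s)f(s)\,ds=0$, so $f\equiv0$ on $[0,T]$ by continuity and on all of $\mathbb{R}$ by $T$-periodicity.

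The main obstacle is purely technical: justifying the term-by-term integration and the conclusion that a power series in $1/t$ vanishing on $(T,\infty)$ has identically zero coefficients. Both are routine once the uniform convergence of the binomial expansion on $[0,T]$ is secured (which requires the restriction $t>T$), and the non-vanishing of $\binom{\alpha-1}{k}$, guaranteed by $\alpha\notin\mathbb{Z}$, is precisely what lets the asymptotic argument peel off the moments one at a time.
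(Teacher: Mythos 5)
Your proof is correct, and it takes a genuinely different route from the paper's. The paper first integrates the vanishing identity in $t$ over $[a,b]$ to show that $\psi(t)=\int_{0}^{T}(T-s+t)^{\alpha}f(s)\,ds$ is constant, then uses the two-sided bound $|\psi(t)|\le c\bigl((T+t)^{\alpha}-t^{\alpha}\bigr)$ with $c=\int_{0}^{T}f^{+}=\int_{0}^{T}f^{-}$ (this is where the hypothesis $\int_{0}^{T}f=0$ enters) and the limit $(T+t)^{\alpha}-t^{\alpha}\to 0$ to get $\psi\equiv 0$, and finally identifies $\psi$ as a convolution of $(T+\cdot)^{\alpha}$ with $f\chi_{[0,T]}$ and invokes the Laplace transform, using the nonvanishing of an incomplete-gamma expression, to conclude $f=0$. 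You instead expand the kernel for $t>T$ by the binomial series, integrate term by term (legitimate by uniform convergence in $s$ on $[0,T]$ since $0\le (T-s)/t\le T/t<1$ and $f$ is bounded), and read off that the power series $\sum_{k}\mu_{k}t^{-k}$ vanishes on $(T,\infty)$, hence all $\mu_{k}=0$ by the identity theorem; since $\binom{\alpha-1}{k}\neq 0$ for all $k$ (as $\alpha\notin\mathbb{Z}$), all polynomial moments of $f$ on $[0,T]$ vanish and Weierstrass finishes the argument. Your version is more elementary and self-contained: it avoids the Laplace transform entirely (and with it the somewhat delicate application of the convolution theorem to the unbounded, non-integrable function $(T+t)^{\alpha}$), it needs only the values $t>T$ of the hypothesis, and it does not require $\int_{0}^{T}f(s)\,ds=0$ as a separate assumption, recovering it as the $k=0$ moment. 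What the paper's route buys in exchange is that it stays within the transform-based toolkit used elsewhere in the article; structurally, though, both arguments hinge on the same fact, namely that the family of kernels $(T-s+t)^{\alpha-1}$, $t$ large, is rich enough to separate continuous functions on $[0,T]$.
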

\begin{proof}
Since $\int_{0}^{T} f(s)ds=0$ then $0=\int_{0}^{T} f(s) ds=\int_{0}^{T} (f^{+}(s)-f^{-}(s)) ds$ and therefore
we can define $c=\int_{0}^{T} f^{+}(s)ds = \int_{0}^{T} f^{-s}(s)ds>0$. If $c=0$ then $f=0$.

Let us define
\[
\phi(t)=\int_{0}^{T} (T-s+t)^{\alpha-1} f(s)ds.
\]
From the hypothesis we have that $\phi(t)=0$ at any $t \in {\mathbb{R}}$. Therefore its integral is also zero. Let us integrate with respect to $t$ from $a$ to $b$ for $0 \leq a \leq b \leq T$:
\begin{multline*}
0=\int_{a}^{b} \phi(t) dt= \int_{a}^{b} \left( \int_{0}^{T} (T-s+t)^{\alpha-1} f(s)ds \right) dt \\
=\int_{0}^{T} \left( \int_{a}^{b} (T-s+t)^{\alpha-1} dt \right) f(s) ds \\
=\int_{0}^{T} \left( \frac{(b-s+T)^{\alpha }-(a-s+T)^{\alpha }}{\alpha } \right) f(s) ds
\end{multline*}
where we have assumed $0 \leq a < b$, $s < T$. Thus,
\[
\int_{0}^{T} \left[ (b-s+T)^{\alpha}-(a-s+T)^{\alpha} \right] f(s) ds=0
\]
which implies that
\[
\psi(t)=\int_{0}^{T} (T-s+t)^{\alpha} f(s)ds
\]
is a constant function.

Moreover, since
\[
t^{\alpha}c-(T+t)^{\alpha}c \leq \int_{0}^{T} (T-s+t)^{\alpha} f(s)ds \leq (T+t)^{\alpha}c-t^{\alpha}c,
\]
where
\[
c=\int_{0}^{T} f^{+}(s)ds=\int_{0}^{T} f^{-}(s)ds
\]
in view of (\ref{eq:32}), and
\[
\lim_{t \to +\infty} ((T+t)^\alpha-t^\alpha)=0,
\]
we have that
\[
\int_{0}^{T} (T-s+t)^{\alpha} f(s)ds=0, \quad \forall t \in {\mathbb{R}}.
\]

Let
\[
\tilde{f}=f\cdot \chi_{[0,T]},  \qquad \tilde{f}(t)=\begin{cases} f(t), & t \in [0,T] \\ 0,& t>T.\end{cases}
\]
If we define
\[
\varphi(t)=(T+t)^{\alpha}
\]
then the convolution of $\varphi$ and $\tilde{f}$ is given by
\[
(\varphi * \tilde{f})=\int_{0}^{+\infty} \varphi(t-s) \tilde{f}(s) ds = \int_{0}^{T} (T+t-s)^{\alpha} f(s)ds=0.
\]
Therefore, if we apply the Laplace transform \cite[Chapter 17]{MR0197789} to the above equality it yields
\[
{\mathcal{L}} [\varphi * \tilde{f}] = {\mathcal{L}} [\varphi]  {\mathcal{L}} [\tilde{f})]= {\mathcal{L}} [0]=0.
\]
Since
\[
{\mathcal{L}} [\varphi]=s^{-\alpha -1} e^{s T} \Gamma (\alpha +1,s T),
\]
where $\Gamma(a,z)$ denotes the incomplete gamma function \cite[Section 6.5]{0171.38503}, then ${\mathcal{L}} [\varphi] \neq 0$ which implies that ${\mathcal{L}} [\tilde{f}]=0$ and therefore $\tilde{f}=0$, i.e. $f=0$ on $[0,T]$.
\end{proof}

\section{The fractional derivative or primitive of a $T$-periodic function cannot be $\tilde{T}$-periodic for any period $\tilde{T}$}\label{S:4}

Let $f$ be a $T$-periodic function and consider $u$ such that
\[
{}^{c}D^{\alpha}u=f(t), \qquad 0<\alpha<1.
\]
Then,
\[
u(t)=u(0)+I^{\alpha} f(t), 
\]
and therefore
\[
{\mathcal{L}}[u(t)] = {\mathcal{L}} u_{0} + {\mathcal{L}} [I^{\alpha} f(t)].
\]
Let us assume that $u$ is a $\tilde{T}$-periodic function. Then, by using some basic properties of the Laplace transform it yields
\[
\frac{\int_{0}^{\tilde{T}} u(t) \exp(-\lambda t) dt}{1-\exp(-\lambda \tilde{T})} = \frac{u_{0}}{\lambda} + \frac{1}{\lambda^{\alpha}} \frac{\int_{0}^{T} f(t) \exp(-\lambda t)dt}{1-\exp(-\lambda T)}.
\]
Therefore
\begin{multline*}
\lambda (1-\exp(-\lambda T)) \int_{0}^{\tilde{T}} u(t) \exp(-\lambda t) dt \\
=u_{0} (1-\exp(-\lambda T)) (1-\exp(-\lambda \tilde{T}))
\\ +\lambda^{1-\alpha} (1-\exp(-\lambda \tilde{T})) \int_{0}^{T} f(t) \exp(-\lambda t) dt.
\end{multline*}
Let us consider $v=u-u_{0}$ so that $v$ is also $\tilde{T}$-periodic and $v(0)=0$. The above equality becomes
\begin{multline*}
\lambda (1-\exp(-\lambda T)) \int_{0}^{\tilde{T}} v(t) \exp(-\lambda t) dt 
 =\lambda^{1-\alpha} (1-\exp(-\lambda \tilde{T})) \int_{0}^{T} f(t) \exp(-\lambda t) dt,
\end{multline*}
or equivalently
\[
\lambda^{\alpha} \frac{(1-\exp(-\lambda T))}{(1-\exp(-\lambda \tilde{T}))} \int_{0}^{\tilde{T}} v(t) \exp(-\lambda t) dt  = \int_{0}^{T} f(t) \exp(-\lambda t) dt.
\]
Thus,
\[
\frac{(1-\exp(-\lambda T))}{(1-\exp(-\lambda \tilde{T}))} \sum_{i=0}^{\infty} (-1)^{i} \frac{\lambda^{\alpha + i}}{i!} \int_{0}^{\tilde{T}} v(t) t^{i} dt = \sum_{i=0}^{\infty} (-1)^{i} \frac{\lambda^{i}}{i!} \int_{0}^{T} f(t) t^{i}dt.
\]
Since
\[
\lim_{\lambda \to 0^{+}} \frac{(1-\exp(-\lambda T))}{(1-\exp(-\lambda \tilde{T}))}  = \frac{T}{\tilde{T}}, \qquad \lim_{\lambda \to 0^{+}} \lambda^{\alpha + i}=0,
\]
by using $0<\alpha<1$ and $i \geq 0$, the limit as $\lambda \to 0^{+}$ of the left hand side is zero, which implies
\[
\int_{0}^{T} f(t)dt=0.
\]
Then,
\begin{multline*}
\frac{(1-\exp(-\lambda T))}{(1-\exp(-\lambda \tilde{T}))} \sum_{i=0}^{\infty} (-1)^{i} \frac{\lambda^{ i}}{i!} \int_{0}^{\tilde{T}} v(t) t^{i} dt =\lambda^{-\alpha} \sum_{i=1}^{\infty} (-1)^{i} \frac{\lambda^{i}}{i!} \int_{0}^{T} f(t) t^{i}dt\\
=\lambda^{1-\alpha} \sum_{i=0}^{\infty} (-1)^{i+1} \frac{\lambda^{i}}{(i+1)!} \int_{0}^{T} f(t) t^{i+1}dt.
\end{multline*}
If we consider $\lambda \to 0^{+}$ in the latter expression we get
\[
\frac{T}{\tilde{T}} \int_{0}^{\tilde{T}} v(t)dt=0,
\]
and therefore
\[
\int_{0}^{\tilde{T}} v(t)dt=0.
\]
By induction, we obtain that
\[
\int_{0}^{T} f(t) t^{i}dt=0, \qquad \int_{0}^{\tilde{T}} v(t) t^{i}dt=0, \qquad i=0,1,2,\dots.
\]
Therefore, $f=u=0$ and there are no nonzero $\tilde{T}$-periodic $L^{\infty}$-solutions of the problem.

\begin{example}
Let $f(t)=\sin(t)$ and $0 < \alpha < 1$. The Caputo-fractional derivative of $f(t)$ is given by 
\[
{}^{c}D^{\alpha} f(t) = \frac{t^{1-\alpha }}{\Gamma (2-\alpha )} \, \, _1F_2\left(1;\frac{3-\alpha}{2},1-\frac{\alpha }{2};-\frac{ t^2}{4}\right),
\]
where the hypergeometric series $\,_{1}F_{2}(a;b,c;d)$  is defined as \cite[Chapter 15]{NIST:DLMF,Olver:2010:NHMF}
\[
\,_{1}F_{2}(a;b,c;d) = \sum_{j=0}^{\infty} \frac{(a)_{j}}{j! (b)_{j}\,(c)_{j}} d^{j},
\]
and the Pochhammer symbol $(A)_{j}=A(A+1)\cdots(A+j-1)$, with $(A)_{0}=1$.

Since
\[
\frac{{}^{c}D^{\alpha} f(\pi)}{{}^{c}D^{\alpha} f(\pi+\tilde{T})}=\frac{\pi ^{1-\alpha } (\tilde{T}+\pi )^{\alpha -1} \, _1F_2\left(1;1-\frac{\alpha
   }{2},\frac{3}{2}-\frac{\alpha }{2};-\frac{\pi ^2}{4}\right)}{\, _1F_2\left(1;1-\frac{\alpha
   }{2},\frac{3}{2}-\frac{\alpha }{2};-\frac{1}{4} (\tilde{T}+\pi )^2\right)},
\]
and
\[
\frac{{}^{c}D^{\alpha} f(\pi/2)}{{}^{c}D^{\alpha} f(\pi/2+\tilde{T})}=
\frac{\left(\frac{2 T}{\pi }+1\right)^{\alpha -1} \, _1F_2\left(1;1-\frac{\alpha }{2},\frac{3}{2}-\frac{\alpha }{2};-\frac{\pi ^2}{16}\right)}{\,
   _1F_2\left(1;1-\frac{\alpha }{2},\frac{3}{2}-\frac{\alpha }{2};-\frac{1}{16} (2 T+\pi )^2\right)},
\]
we have that ${}^{c}D^{\alpha} f(t)$ is not a $\tilde{T}$-periodic function for any positive $\tilde{T}$ and $\alpha \in (0,1)$.  Plotting both functions $\sin(t)$ and ${}^{c}D^{\alpha}\sin(t)$, this last function seems to be periodic but it is not according to our results.

Notice that Kaslik and Sivasundaram \cite{MR2863974} gave the following alternate representation
\[
\,^{c}D^{\alpha} \sin(t)=\frac{1}{2} t^{1-\alpha} \left[ E_{1,2-\alpha}(it)+E_{1,2-\alpha}(-it) \right],
\]
in terms of the two-parameter Mittag-Leffler function \cite[Chapter 10]{NIST:DLMF,Olver:2010:NHMF}
\[
E_{\alpha,\beta}(z)=\sum_{k=0}^{\infty} \frac{z^{k}}{\Gamma(\alpha k + \beta)}.
\]
\end{example}

\section{Periodic solutions of fractional differential equations}\label{S:5}

In this section we show how Theorem \ref{t:21}, can be used to give a nonexistence result of periodic solutions for fractional differential equations. 

Consider the first order ordinary differential equation
\begin{equation}\label{eq:41}
D^{1}u(t)=\varphi(u(t)), \qquad t \in {\mathbb{R}},
\end{equation}
where $\varphi: {\mathbb{R}} \to {\mathbb{R}}$ is continuous. An important question is the existence of periodic solutions \cite{MR2339780,MR2761514,MR1165534}.

If $u:{\mathbb{R}} \to {\mathbb{R}}$ is a $T$-periodic solution of (\ref{eq:41}) then obviously
\begin{equation}\label{eq:42}
u(0)=u(T).
\end{equation}
One can find $T$-periodic solutions of (\ref{eq:41}) by solving the equation only on the interval $[0,T]$ and then checking the values $u(0)$ and $u(T)$. If (\ref{eq:42}) holds, then extending by $T$-periodicity the function $u(t)$, $t \in [0,T]$, to ${\mathbb{R}}$ we have a $T$-periodic solution of (\ref{eq:41}).

However, this is not possible for a fractional differential equation. Consider, for $\alpha \in (0,1)$, the equation
\begin{equation}\label{eq:43}
{}^{c}D^{\alpha}u(t)=\varphi(u(t)), \qquad t \in {\mathbb{R}}.
\end{equation}
If $u$ is a solution of (\ref{eq:43}), let $f(t)=\varphi(u(t))$. Then
\begin{equation}\label{eq:44}
u(t)=u(0)+I^{\alpha}f(t).
\end{equation}
In the case that $u$ is a $T$-periodic solution of (\ref{eq:43}) we have that $f$ is also $T$-periodic. According to Theorem \ref{t:21}, $I^{\alpha}f$ cannot be $T$-periodic unless it is the zero function and we have the following relevant result:
\begin{theorem}\label{t:41}
The fractional equation {\rm{(\ref{eq:43})}} cannot have periodic solutions with the exception of constant functions $u(t)=u_{0}$, $t\in {\mathbb{R}}$, with $\varphi(u_{0})=0$.
\end{theorem}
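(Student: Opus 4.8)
The plan is to deduce Theorem \ref{t:41} directly from Theorem \ref{t:21}, turning the observation made just before the statement into a clean contradiction. Suppose, for contradiction, that equation (\ref{eq:43}) admits a nonconstant $T$-periodic solution $u$ for some period $T>0$. I would first record that $u$ is continuous, being of the form $u(t)=u_0+I^{\alpha}f(t)$ with $I^{\alpha}f$ continuous, and hence bounded on $[0,T]$; therefore $f(t)=\varphi(u(t))$ is the composition of the continuous function $\varphi$ with the continuous $T$-periodic function $u$, so $f$ is itself continuous and $T$-periodic, and in particular $f\in{\mathcal{L}}^{1}_{\text{loc}}({\mathbb{R}})$. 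This is precisely the regularity required to invoke Theorem \ref{t:21}.

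The key step is to observe that relation (\ref{eq:44}) forces $I^{\alpha}f$ to be $T$-periodic with the \emph{same} period $T$. Indeed, from (\ref{eq:44}) we have
\[
I^{\alpha}f(t)=u(t)-u_0,
\]
and the right-hand side is $T$-periodic because $u$ is $T$-periodic and $u_0$ is constant. Thus $f$ is a $T$-periodic ${\mathcal{L}}^{1}_{\text{loc}}$ function whose fractional primitive $I^{\alpha}f$ is again $T$-periodic. By the contrapositive of Theorem \ref{t:21}, this is impossible unless $f$ is the zero function, i.e. $\varphi(u(t))=0$ for all $t\in{\mathbb{R}}$. Substituting $f\equiv 0$ back into (\ref{eq:44}) gives $u(t)=u_0$ for every $t$, so $u$ is the constant $u_0$ with $\varphi(u_0)=0$, contradicting the assumption that $u$ is nonconstant. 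For the converse, any constant $u\equiv u_0$ with $\varphi(u_0)=0$ trivially solves (\ref{eq:43}) since ${}^{c}D^{\alpha}u_0=0=\varphi(u_0)$, so the constant functions with $\varphi(u_0)=0$ are exactly the periodic solutions.

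Essentially all the analytic difficulty has been exported to Theorem \ref{t:21}; the remaining points require care in bookkeeping rather than in substance. The main thing to check is that the period of $f$ and the period of $I^{\alpha}f$ genuinely coincide, so that the $T$-to-$T$ statement of Theorem \ref{t:21}, rather than the $\tilde{T}$-statement of Section \ref{S:4}, is the relevant tool, and that $f$ inherits enough regularity ($T$-periodicity together with local integrability) for the theorem to apply. I would also make explicit that "periodic" here means periodic of some period $T>0$: the argument is run for an arbitrary such $T$, and no uniformity in $T$ is needed, because for each fixed periodic solution the relevant period is fixed once and for all before Theorem \ref{t:21} is applied.
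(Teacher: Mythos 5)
Your proposal is correct and follows essentially the same route as the paper: the paper also sets $f(t)=\varphi(u(t))$, notes via (\ref{eq:44}) that $I^{\alpha}f=u-u(0)$ would be $T$-periodic, and invokes Theorem \ref{t:21} to conclude $f\equiv 0$ and hence $u$ constant with $\varphi(u_{0})=0$. Your additional remarks on the regularity of $f$ and the converse direction are sound but only make explicit what the paper leaves implicit.
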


\begin{remark}\label{r:42}
It is possible to consider the periodic boundary value problem
\begin{equation}
\begin{cases} {}^{c}D^{\alpha}u(t)=\varphi(u(t)), & t \in [0,T], \\
u(0)=u(T), &
\end{cases}
\end{equation}
as in, for example, \cite{MR2525590}, but one cannot extend the solution of that periodic boundary value problem on $[0,T]$ to a $T$-periodic solution on ${\mathbb{R}}$ (unless $u$ is a constant function, as indicated in {\rm{Theorem \ref{t:41})}}.
\end{remark}

\begin{remark}\label{r:43}
The same applies to the Riemann-Liouville fractional differential equation
\[
D^{\alpha} u(t) = \varphi(u(t)), \qquad t \in {\mathbb{R}},
\]
taking into account that
\[
\lim_{t \to 0^{+}} t^{1-\alpha} u(t) = \frac{D^{\alpha-1} u(0)}{\Gamma(\alpha)}.
\]
\end{remark}

\begin{example}
Considering the fractional equation
\begin{equation}\label{eq:46}
{}^{c}D^{\alpha}u(t)=\psi(t,u(t)), \qquad t \in {\mathbb{R}},
\end{equation}
with $\psi:{\mathbb{R}}^{2} \to {\mathbb{R}}$ defined by
\[
\psi(t,u)=u+\frac{t^{1-\alpha }}{\Gamma (2-\alpha )} \, \, _1F_2\left(1;\frac{3-\alpha}{2},1-\frac{\alpha }{2};-\frac{ t^2}{4}\right)-\sin(t),
\]
we have that $u(t)=\sin(t)$ is a $2\pi$-periodic solution of {\rm{(\ref{eq:46})}}. This shows that the result of {\rm{Theorem \ref{t:41}}} is not valid for a non autonomous fractional differential equation as {\rm{(\ref{eq:46})}}.
\end{example}

\section{Conclusion}
By using the classical concepts of fractional calculus and elementary analysis, we have proved that periodicity is not transferred by fractional integral or derivative, with the exception of the zero function. We have also proved that the fractional derivative or primitive of a $T$-periodic function cannot be $\tilde{T}$-periodic for any period $\tilde{T}$. As a consequence we have showed that an autonomous fractional differential equation cannot have periodic solutions with the exception of constant functions.

\section*{Acknowledgements}

The referees and editor deserve special thanks for careful reading and many useful comments and suggestions which have improved the manuscript. The work of I. Area has been partially supported by the Ministerio de Econom\'{\i}a y Competi\-tividad of Spain under grant MTM2012--38794--C02--01, co-financed by the European Community fund FEDER. J.J. Nieto also acknowledges partial financial support by the Ministerio de Econom\'{\i}a y Competi\-tividad of Spain under grant MTM2010--15314, co-financed by the European Community fund FEDER.


\begin{thebibliography}{10}

\bibitem{0171.38503}
Milton Abramowitz and Irene~A. Stegun.
\newblock {\em {Handbook of mathematical functions with formulas, graphs and
  mathematical tables.}}
\newblock {Washington: U.S. Department of Commerce. XIV, 1046 p. (1964); Table
  Errata Math. Comput. 21, 747}, 1964.

\bibitem{MR2525590}
Mohammed Belmekki, Juan~J. Nieto, and Rosana Rodr{\'{\i}}guez-L{{\'o}}pez.
\newblock Existence of periodic solution for a nonlinear fractional differential equation.
\newblock {\em Bound. Value Probl.}, pages Art. ID 324561, 18, 2009.

\bibitem{zbMATH05316781}
Sergio {Bittanti} and Patrizio {Colaneri}.
\newblock {\em {Periodic systems. Filtering and control.}}
\newblock London: Springer, 2009.

\bibitem{MR0179403}
William~E. Boyce and Richard~C. DiPrima.
\newblock {\em Elementary differential equations and boundary value problems}.
\newblock John Wiley \& Sons, Inc., New York-London-Sydney, 1965.

\bibitem{MR2339780}
Adriana Buic{\u{a}}.
\newblock {\em Periodic solutions for nonlinear systems}.
\newblock Cluj University Press, Cluj-Napoca, 2006.

\bibitem{MR2761514}
T.~A. Burton.
\newblock {\em Stability and periodic solutions of ordinary and functional
  differential equations}.
\newblock Dover Publications, Inc., Mineola, NY, 2005.

\bibitem{MR2306745}
Jonathan Chauvin, Gilles Corde, Nicolas Petit, and Pierre Rouchon.
\newblock Periodic input estimation for linear periodic systems: automotive
  engine applications.
\newblock {\em Automatica J. IFAC}, 43(6):971--980, 2007.

\bibitem{MR2942338}
Jonathan Chauvin and Nicolas Petit.
\newblock Periodic inputs reconstruction of partially measured linear periodic
  systems.
\newblock {\em Automatica J. IFAC}, 48(7):1467--1472, 2012.

\bibitem{NIST:DLMF}
{NIST Digital Library of Mathematical Functions}.
\newblock http://dlmf.nist.gov/, Release 1.0.8 of 2014-04-25.
\newblock Online companion to \cite{Olver:2010:NHMF}.

\bibitem{zbMATH03112145}
Joseph {Fourier}.
\newblock {The analytical theory of heat.}
\newblock {New York: Dover Publications, Inc. XXIII, 466 p. 20 Fig.}, 1955.

\bibitem{MR0197789}
I.~S. Gradshteyn and I.~M. Ryzhik.
\newblock {\em Table of integrals, series, and products}.
Academic Press, New York, 1965.

\bibitem{MR2889214}
Yutaka Hori, Tae-Hyoung Kim, and Shinji Hara.
\newblock Existence criteria of periodic oscillations in cyclic gene regulatory networks.
\newblock {\em Automatica J. IFAC}, 47(6):1203--1209, 2011.

\bibitem{MR2863974}
Eva Kaslik and Seenith Sivasundaram.
\newblock Non-existence of periodic solutions in fractional-order dynamical
  systems and a remarkable difference between integer and fractional-order
  derivatives of periodic functions.
\newblock {\em Nonlinear Anal. Real World Appl.}, 13(3):1489--1497, 2012.

\bibitem{MR2218073}
Anatoly~A. Kilbas, Hari~M. Srivastava, and Juan~J. Trujillo.
\newblock {\em Theory and applications of fractional differential equations},
  volume 204 of {\em North-Holland Mathematics Studies}.
\newblock Elsevier Science B.V., Amsterdam, 2006.

\bibitem{knopp1996theory}
K.~Knopp.
\newblock {\em Theory of Functions Parts I and II, Two Volumes Bound as One,  Part II}.
\newblock Dover Publications, 1996.

\bibitem{Lagrange1765}
J.~L. Lagrange.
\newblock {\em Sue les courbes tautochrones}, volume Reprinted in Oeuvres de
  Lagrange, tome 2, section deuxi\`eme: M\'emoires extraits des recueils de
  l'Academie royale des sciences et Belles-Lettres de Berlin. Paris:
  Gauthier-Villars, pp. 317-332, 1868.
\newblock M{\'e}m. de l'Acad. Roy. des Sci. et Belles-Lettres de Berlin 21,   1765.

\bibitem{Olver:2010:NHMF}
F.~W.~J. Olver, D.~W. Lozier, R.~F. Boisvert, and C.~W. Clark, editors.
\newblock {\em {NIST Handbook of Mathematical Functions}}.
\newblock Cambridge University Press, New York, NY, 2010.
\newblock Print companion to \cite{NIST:DLMF}.

\bibitem{MR1658022}
Igor Podlubny.
\newblock {\em Fractional differential equations}, volume 198 of {\em
  Mathematics in Science and Engineering}.
\newblock Academic Press, Inc., San Diego, CA, 1999.

\bibitem{MR1165534}
Eduard Reithmeier.
\newblock {\em Periodic solutions of nonlinear dynamical systems}, volume 1483
  of {\em Lecture Notes in Mathematics}.
\newblock Springer-Verlag, Berlin, 1991.

\bibitem{MR3163802}
Jun Shen and James Lam.
\newblock Non-existence of finite-time stable equilibria in fractional-order
  nonlinear systems.
\newblock {\em Automatica J. IFAC}, 50(2):547--551, 2014.

\bibitem{MR3029117}
Dawei Shi and Tongwen Chen.
\newblock Approximate optimal periodic scheduling of multiple sensors with
  constraints.
\newblock {\em Automatica J. IFAC}, 49(4):993--1000, 2013.

\bibitem{MR3080465}
Dawei Shi and Tongwen Chen.
\newblock Optimal periodic scheduling of sensor networks: a branch and bound
  approach.
\newblock {\em Systems Control Lett.}, 62(9):732--738, 2013.

\bibitem{MR2877170}
Mohammad~Saleh Tavazoei.
\newblock A note on fractional-order derivatives of periodic functions.
\newblock {\em Automatica J. IFAC}, 46(5):945--948, 2010.

\bibitem{MR2879511}
Mohammad~Saleh Tavazoei and Mohammad Haeri.
\newblock A proof for non existence of periodic solutions in time invariant fractional order systems.
\newblock {\em Automatica J. IFAC}, 45(8):1886--1890, 2009.

\bibitem{MR2971825}
JinRong Wang, Michal Fe{\u{c}}kan, and Yong Zhou.
\newblock Nonexistence of periodic solutions and asymptotically periodic solutions for fractional differential equations.
\newblock {\em Commun. Nonlinear Sci. Numer. Simul.}, 18(2):246--256, 2013.

\end{thebibliography}

\end{document}